 \received{\dots}{\dots}
\DeclareMathOperator{\Princ}{Princ}
\DeclareMathOperator{\Base}{Base}
\DeclareMathOperator{\Conc}{\tup{Con}_\tup{c}}
\newcommand{\Pd}{P^{\,\tup{d}}}
\theoremstyle{plain}
\newtheorem{theorem}{Theorem}
\newtheorem{lemma}[theorem]{Lemma}
\theoremstyle{definition}
\newtheorem{problem}{Problem}
\begin{document}
\title{The order of principal congruences of a bounded lattice}  
\author{G. Gr\"{a}tzer} 
\email[G. Gr\"atzer]{gratzer@me.com}
\urladdr[G. Gr\"atzer]{http://server.maths.umanitoba.ca/homepages/gratzer/}
\address{Department of Mathematics\\
  University of Manitoba\\
  Winnipeg, MB R3T 2N2\\
  Canada}

\subjclass[2010]{Primary: 06B10; Secondary: 06A06.}
\keywords{principal congruence, order}

\begin{abstract}
We characterize the order of principal congruences 
of a bounded lattice as a bounded ordered set.
We~also state a number of open problems in this new field.
\end{abstract}

\maketitle

\section{Introduction}\label{S:Intro}

\subsection{Congruence lattices}\label{S:conglattices}
Let $A$ be a lattice (resp., join-semilattice with zero). 
We call $A$ \emph{representable} 
if there exist a lattice $L$
such that $A$ is isomorphic to the congruence lattice of $L$, 
in formula, $A \iso \Con L$ 
(resp., $A$ is isomorphic to the join-semilattice with zero
of compact congruences of $L$, 
in~formula, $A \iso \Conc L$).

For over 60 years, one of lattice theory's most central conjectures 
was the following:
%
\begin{quote}
\emph{Characterize representable lattices as distributive algebraic lattices.}
\end{quote}
(Or equivalently:
Characterize representable join-semilattices as distributive join-semilattice with zero.)
This conjecture was refuted in F. Wehrung \cite{CLP}.

The finite case of this field 
is surveyed in my book \cite{CFL}.
The infinite case---along with some research fields 
connected with it---is surveyed 
in four chapters in \cite{LTSTA}, 
three by F.~Wehrung and one by me.

\subsection{Principal congruences}\label{S:principal}
In this note, we deal with $\Princ L$, 
the order of principal congruences of a lattice $L$.
Observe that 
\begin{enumeratea}

\item $\Princ L$ is a directed order with zero.

\item $\Conc L$ is the set of compact elements of $\Con L$, 
a lattice theoretic characterization of this subset.

\item $\Princ L$ is a directed subset of $\Conc L$ 
containing the zero and join-generat\-ing $\Conc L$;
there is no lattice theoretic characterization of this subset.
\end{enumeratea}
Figure~\ref{F:N7} shows the lattice $\SN 7$ 
and its congruence lattice $\SB 2 + 1$.
Note that $\Princ \SN 7 = {\Con \SN 7} - \set{\bgg}$.
While in the standard representation~$K$
of $\SB 2 + 1$ as a congruence lattice
(G. Gr\"atzer and E.\,T. Schmidt \cite{GS62}; 
see also in my books \cite{CFL}, \cite{LTF}), 
we have $\Princ K = \Con K$. 
This example shows that $\Princ L$ has no lattice theoretic description in $\Con L$.

\begin{figure}[htb]
\centerline{\includegraphics[scale=1.0]{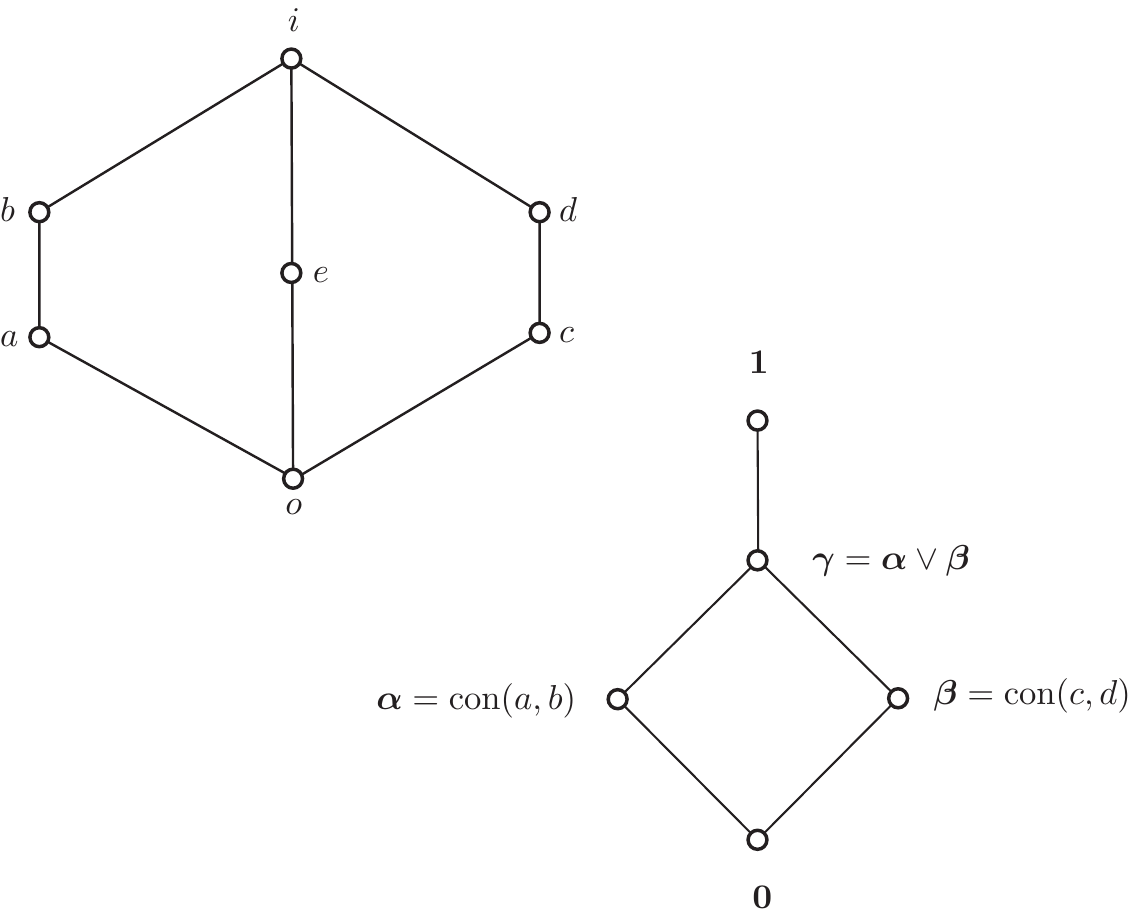}}
\caption{The lattice $\SN 7$ and its congruence lattice}\label{F:N7}
\end{figure}

It was pointed out in G. Gr\"atzer and E.\,T. Schmidt \cite{GS60a} 
that for every algebra $\F A$ we can construct
an algebra $\F B$
such that $\Con \F A \iso \Con \F B$  
and $\Princ \F B = \Conc {\F B}$.

For a long time, we have tried 
to prove such a result for lattices
but we have been unable to construct even
a \emph{proper} congruence-preserving extension
for a general lattice; see the discussion
in G. Gr\"atzer and E.\,T. Schmidt \cite{GS95d}.
This logjam was broken 
in G. Gr\"atzer and F. Wehrung \cite{GW99}
by introducing the boolean triple construction.
G. Gr\"atzer and E.\,T. Schmidt \cite{GS01b} 
uses this construction to prove the following result:

\begin{theorem}\label{T:old}
Every lattice $L$ has a 
congruence-preserving extension $K$
satisfying 
\[
   \Princ K = \Conc K.
\]
\end{theorem}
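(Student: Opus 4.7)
The plan is to build $K$ as the direct limit of a chain of congruence-preserving extensions, where at each stage one more join of two principal congruences is converted into a principal congruence. The scaffolding is a \emph{single-step lemma}: for any lattice $L'$ and any two principal congruences $\conplus{L'}{a,b}$ and $\conplus{L'}{c,d}$ of $L'$, there exist a congruence-preserving extension $L'\leq L''$ and a pair $u\leq v$ in $L''$ such that, under the natural isomorphism $\Con L' \iso \Con L''$, the principal congruence $\conplus{L''}{u,v}$ corresponds to $\conplus{L'}{a,b}\jj\conplus{L'}{c,d}$.

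To prove this lemma, I would invoke the boolean triple construction of G.~Gr\"atzer and F.~Wehrung \cite{GW99}. The idea is to attach to $L'$ a bounded auxiliary lattice $T$ containing two new elements $u<v$, glued to $L'$ along the intervals $[a,b]$ and $[c,d]$ in such a way that (i) the congruences of the resulting $L''$ restrict bijectively to the congruences of $L'$, and (ii) in $L''$, collapsing $u$ and $v$ forces exactly the identifications witnessing $\conplus{L'}{a,b}\jj\conplus{L'}{c,d}$ on $L'$, while conversely those identifications force $u\equiv v$. The boolean triple machinery delivers (i); the flexibility in how $T$ is glued lets one engineer (ii).

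Once the lemma is available, enumerate all ordered pairs of principal congruences (countably, or by a transfinite bookkeeping if $|L|$ is uncountable) and apply the lemma repeatedly to obtain a chain $L = L_0 \leq L_1 \leq L_2 \leq \cdots$. Let $K$ be its direct limit. Since $\Con$ commutes with direct limits and each transition map induces an isomorphism on congruences, the directed union of congruence-preserving extensions is again a congruence-preserving extension; hence $K$ is congruence-preserving over $L$. Any compact congruence of $K$ has the form $\conplus{K}{a_1,b_1}\jj\dots\jj\conplus{K}{a_n,b_n}$; the pairs $(a_i,b_i)$ all lie in some $L_m$, and by the bookkeeping their join has been converted to a single principal congruence at some later stage $L_{m'}$, which remains principal in $K$. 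Thus $\Princ K = \Conc K$.

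The main obstacle is the single-step lemma: one must arrange that the newly adjoined principal congruence $\conplus{L''}{u,v}$ restricts to \emph{exactly} $\conplus{L'}{a,b}\jj\conplus{L'}{c,d}$, not to a coarser congruence of $L'$. This is precisely the type of control that the boolean triple construction of \cite{GW99} was introduced to provide, and which earlier approaches (recounted in \cite{GS95d}) failed to achieve, not even yielding a proper congruence-preserving extension in sufficient generality. After this lemma is secured, the direct-limit assembly is essentially bookkeeping.
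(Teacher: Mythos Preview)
The paper does not prove this theorem. Theorem~\ref{T:old} is quoted in the introduction as a known result from G.~Gr\"atzer and E.\,T.~Schmidt~\cite{GS01b}; the only information the paper gives about its proof is that it ``uses this construction,'' meaning the boolean triple construction of~\cite{GW99}. The paper's own technical work is devoted entirely to Theorem~\ref{T:bounded}, a different statement proved by an unrelated explicit construction (the lattices $F$, $S(p,q)$, and $K$ of Section~\ref{S:construction}). So there is no proof in this paper to compare your proposal against.

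That said, your outline is consistent with the one hint the paper provides, and the direct-limit-of-congruence-preserving-extensions architecture is the natural shape for such an argument. One caution: your single-step lemma is asserted rather than argued. You write that ``the boolean triple machinery delivers (i); the flexibility in how $T$ is glued lets one engineer (ii),'' but (ii) is precisely the substantive point. The construction of~\cite{GW99} produces congruence-preserving extensions; arranging that a \emph{prescribed} join $\conplus{L'}{a,b}\jj\conplus{L'}{c,d}$ becomes principal in the extension, and restricts back to exactly that join rather than something coarser, is additional work you have not sketched. Since the present paper does not carry out any of this, the details you need are in~\cite{GS01b}, not here.
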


So if a distributive join-semilattice with zero $S$
can be represented as $\Conc L$ for a lattice $L$, then
$S$ can also be represented as $\Princ K$ for a lattice $K$.
This is a further illustration of the fact that
$\Princ L$ has no lattice theoretic description
in $\Con L$.

\subsection{The result}\label{S:result}

For a bounded lattice $L$, the order $\Princ K$ is bounded. 
We now state the converse.

\begin{theorem}\label{T:bounded}
Let $P$ be an order with zero and unit.
Then there is a bounded lattice~$K$ such that
\[
   P \iso \Princ K.
\]
If $P$ is finite, we can construct $K$ as a finite lattice.
\end{theorem}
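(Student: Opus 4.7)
The plan is to encode each element $p \in P$ by a designated prime interval $e_p$ of a bounded lattice~$K$, and then to glue small gadgets that enforce the order of $P$ on the corresponding principal congruences $\Theta(e_p)$. The bottom $0_P$ will give the trivial congruence, and the top $1_P$ will give $\Theta(0_K, 1_K) = \nabla_K$, which is automatically principal and the top of $\Princ K$ because $K$ is bounded.

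I would begin with the finite case. For each covering $p \prec q$ of $P$, take a small gadget (a pentagon $N_5$ or an $\SN 7$-style lattice) containing two prime intervals $e_p$ and $e_q$ arranged so that $\Theta(e_q) \geq \Theta(e_p)$, and no other prime interval of the gadget is perspective to either. Glue these gadgets along their shared edges and frame the result between a new $0_K$ and $1_K$ so that $e_{1_P}$ is projective to a maximal chain; this produces a finite bounded lattice~$K$. A coloring argument of the sort developed in \cite{CFL} then shows that $\Theta(e_p) \leq \Theta(e_q)$ holds in $K$ iff $p \leq q$ in $P$; the ``only if'' direction, which requires a congruence of $K$ collapsing $e_q$ but not $e_p$ whenever $p \not\leq q$, is the separation lemma that the gadgets are designed to satisfy.

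The crucial and most delicate step is proving that $p \mapsto \Theta(e_p)$ is onto $\Princ K$, i.e., that $K$ acquires no extra principal congruences. Any interval $[u,v] \subseteq K$ decomposes along a maximal chain into prime intervals each carrying a label $p_i$, so $\Theta(u,v) = \Theta(e_{p_1}) \vee \dots \vee \Theta(e_{p_n})$. The gadgets must be configured so that any such collection $\set{p_1,\dots,p_n}$ that is actually realized by an interval of $K$ possesses a maximum in $P$, and this maximum $p$ then satisfies $\Theta(u,v) = \Theta(e_p)$. Otherwise the join in $\Conc K$ of two unrelated labels would yield a spurious principal congruence---precisely the obstruction that the element $\bgg$ illustrates in the $\SN 7$ example of Figure~\ref{F:N7}. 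I expect this to be the main obstacle, because $P$ is only an order, not a join-semilattice, so the gluing must prevent two incomparable labels from both appearing in the prime-interval decomposition of any single interval of $K$; this typically forces a rather linear or ``chain-of-gadgets'' shape on~$K$.

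For arbitrary bounded $P$, I would write $P$ as a directed union of its finite bounded suborders $P_\alpha$ (each containing $0_P$ and $1_P$), apply the finite construction to each $P_\alpha$ to obtain a bounded lattice $K_\alpha$, and then take a directed colimit of the $K_\alpha$ along $\set{0,1}$-preserving, coloring-respecting embeddings. Since every principal congruence of the colimit is supported on a single finite interval, it already arises in some $K_\alpha$; provided the transition maps reflect the designated prime intervals, no spurious principal congruence is created in the limit, and $P \iso \Princ K$ follows from the finite case applied at each stage.
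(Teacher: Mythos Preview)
Your high-level plan matches the paper's: encode each $p \in P^-$ by a prime interval $[a_p, b_p]$ and attach a gadget for each comparable pair so as to force $\con{a_p,b_p} \leq \con{a_q,b_q}$ when $p < q$. You also correctly isolate surjectivity onto $\Princ K$ as the crux. The gap is in how you propose to secure it.

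The paper does \emph{not} arrange $K$ in a ``chain-of-gadgets'' shape to keep incomparable labels from meeting inside a single interval. It uses a complementarity trick instead: whenever $x \in S(p,q)^-$ and $y \in S(p',q')^-$ with $\{p,q\} \cap \{p',q'\} = \varnothing$, the elements $x$ and $y$ are declared complementary ($x \jj y = i$, $x \mm y = o$), and similarly for the isolated chains $C_p$ and for the elements $a_0, a_1$. Consequently any pair $x < y$ in $K$ either lies inside a single gadget $S(p,q)$---whose only I-congruences are $\con{a_p,b_p} < \con{a_q,b_q}$---or else generates a congruence collapsing $o$ or $i$ with an interior element, so that $\con{x,y} = \one$, which is harmless since $\one$ corresponds to $1_P \in P$. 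This complementarity device is the missing mechanism; without it a naive gluing produces exactly the spurious joins you worry about as soon as $P^-$ contains a two-element antichain, and nothing in your sketch prevents that.

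Two further differences are worth noting. First, the paper inserts a gadget $S(p,q)$ for \emph{every} pair $p < q$ in $P^-$, not only for coverings; this is precisely what lets the construction work directly and uniformly for infinite $P$ (which may have no covering relations at all) and renders your colimit detour unnecessary. Second, even if one pursued the colimit route, you would still have to check that the transition maps $K_\alpha \hookrightarrow K_\beta$ reflect principal congruences in the required way---a genuine functoriality issue that you leave unaddressed and that the paper's one-shot construction (a lattice of length~$5$) sidesteps entirely.
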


We construct $K$ as a lattice of length $5$.
So $K$ is complete.
All of its congruences are complete.
So we also get Theorem~\ref{T:bounded}
for principal congruences of \emph{complete lattices}
and for principal \emph{complete congruences} of complete lattices.

\subsection{Problems}
The real purpose of this note is to state some of the many
open problems in this field.

\subsubsection{General lattices}

\begin{problem}\label{P:Prince}
Can we characterize the order $\Princ L$ for a lattice $L$
as a directed order with zero?
\end{problem}

Even more interesting would be to charaterize 
the pair $P = \Princ L$ in $S =\Conc L$ by the properties
that $P$ is a directed order with zero 
that join-generates~$S$. 
We have to rephrase this 
so it does not require a solution 
of the congruence lattice characterization problem.

\begin{problem}\label{P:Princ in Conc}
Let $S$ be a representable join-semilattice. 
Let $P \ci S$ be a directed order with zero 
and let $P$ join-generate~$S$.
Under what conditions is there a lattice $K$ such 
that $\Conc K$ is isomorphic to $S$
and under this isomorphism $\Princ K$ corresponds to $P$?
\end{problem}

For a lattice $L$, let us define a \emph{valuation} $v$ 
on $\Conc L$ as follows: 
for a compact congruence \bga of $L$, 
let $v(\bga)$ be the smallest integer $n$
such that the congruence \bga is the join of $n$ principal congruences.
A valuation $v$ has some obvious properties, for instance,
$v(\zero) = 0$ and $v(\bga \jj \bgb) \leq v(\bga) + v(\bgb)$.
Note the connection with $\Princ L$:
\[
   \Princ L = \setm{\bga \in \Conc L}{v(\bga) \leq 1}.
\]

\begin{problem}\label{P:valuation}
Let $S$ be a representable join-semilattice. 
Let $v$ map $S$ to the natural numbers.
Under what conditions is there an isomorphism \gf
of $S$ with $\Conc K$ for some lattice $K$
so that under \gf the map $v$ corresponds 
to the valuation on $\Conc K$?
\end{problem}

\subsubsection{Finite lattices}
Let $D$ be a finite distributive lattice.
In G. Gr\"atzer and E.\,T. Schmidt \cite{GS62},
we represent $D$ as the congruence lattice
of a finite lattice~$K$ in which 
\emph{all congruences are principal} (that is, $\Con K = \Princ K$).


\begin{problem}\label{P: Q in Con}
Let $D$ be a finite distributive lattice. 
Let $Q$ be a subset of $D$ satisfying
$\set{0, 1} \uu \Ji K \ci Q \ci D$.
When is there a finite lattice~$K$ such 
that $\Con K$ is isomorphic to $D$
and under this isomorphism $\Princ K$ 
corresponds to $Q$?
\end{problem}

In the finite variant of Problem~\ref{P:valuation},
we need an additional property.

\begin{problem}\label{P:finite valuation}
Let $S$ be a finite distributive lattice. 
Let $v$ be a map of $D$ to the natural numbers
satisfying $v(0) = 0$, $v(1) = 1$, and 
$v(a \jj b) \leq v(a) + v(b)$ for $a, b \in D$.
When is there an isomorphism \gf
of $D$ with $\Con K$ for some finite lattice~$K$
such that under \gf the map $v$ corresponds 
to the valuation on $\Con K$?
\end{problem}

\subsubsection{Special classes of lattices}

There are many problems 
that deal with lattices with
we only mention two.

In G. Gr\"atzer, H. Lakser and E.\,T. Schmidt \cite{GLS98a} 
and G. Gr\"atzer and E.\,T. Schmidt \cite{GS01a}, 
we investigate congruence lattices of finite semimodular lattices.

\begin{problem}\label{P:semimod}
In Theorem~\ref{T:bounded}, can we construct a
semimodular lattice?
\end{problem}

\begin{problem}\label{P:semimodproblems}
In Problems \ref{P:Princ in Conc} and \ref{P:valuation}, 
in the finite case, 
can we construct a finite semimodular lattice $K$?
\end{problem}

In E.~T. Schmidt \cite{tS62}
(see also G. Gr\"atzer and E.\,T. Schmidt \cite{GS03a}),
for a finite distributive lattice $D$, 
a countable modular lattice $M$ is constructed
with $\Con M \iso D$.

\begin{problem}\label{P:mod}
In Theorem~\ref{T:bounded}, for a finite $P$,
can we construct a countable modular lattice $K$?
\end{problem}

For the background for some other classes of lattices,
see my book \cite{CFL}.

\subsubsection{Complete lattices}

The techniques developed in this note may be applicable
to solve the following problem:

\begin{problem}\label{P:bounded}
Let~$K$ be a bounded lattice. Does there exist a complete lattice~$L$ such that $\Con{K}\cong\Con{L}$?
\end{problem}

\subsubsection{Algebras in general}\label{S:Algebras}

Some of these problems seem to be of interest for algebras
other than lattices as well.

\begin{problem}\label{P:PrinceAlg}
Can we characterize the order $\Princ \F A$ 
for an algebra~$\F A$
as an order with zero?
\end{problem}

\begin{problem}\label{P:unit}
For an algebra $\F A$,
how is the assumption that the unit congruence $\one$
is compact reflected in  the order $\Princ \F A$? 
\end{problem}

\begin{problem}\label{P:PrinceAlgStrong}
Let  $\F A$ be an algebra
and let $\Princ \F A \ci Q \ci \Conc \F A$. 
Does there exist an algebra $\F B$ such that
$\Con \F A \iso \Con \F B$ and under this isomorphism
$Q$ corresponds to $\Princ \F B$? 
\end{problem}

\begin{problem}\label{P:ValAlg}
Extend the concept of valuation to algebras.
State and solve Problem~\ref{P:valuation} 
for algebras.
\end{problem}

\begin{problem}\label{P:congpres}
Can we sharpen the result of 
G. Gr\"atzer and E.\,T. Schmidt \cite{GS60a}:
every algebra $\F A$ 
has a congruence-preserving extension $\F B$
such that $\Con \F A \iso \Con \F B$  
and $\Princ \F B = \Conc {\F B}$.
\end{problem}

I do not even know whether every algebra $\F A$ has a 
proper congruence-preserving extension $\F B$.

\section{The construction}\label{S:construction}
For a bounded order $Q$, 
let $Q^-$ denote the order $Q$ with the bounds removed.
Let $P$ be the order in Theorem~\ref{T:bounded}. 
Let $0$ and $1$ denote the zero and unit of $P$, respectively.
We denote by $\Pd$ those elements of $P^-$ that are not comparable to any other element of $P^-$, that is,
\[
   \Pd = \setm{x \in P^-}{ x \parallel y 
      \text{ for all } y \in P^-,\ y \neq x}.
\]

\subsection{The lattice $F$}\label{S:F}
We first construct the lattice $F$
consisting of the elements \text{$o$, $i$} 
and the elements $a_p, b_p$ for every $p \in P$,
where $a_p \neq b_p$ for every $p \in P^-$
and $a_0 = b_0$, $a_1 = b_1$. These elements are ordered and
the lattice operations are formed as in Figure~\ref{F:F}. 
\begin{figure}[h!]
\centerline{\includegraphics[scale=1.0]{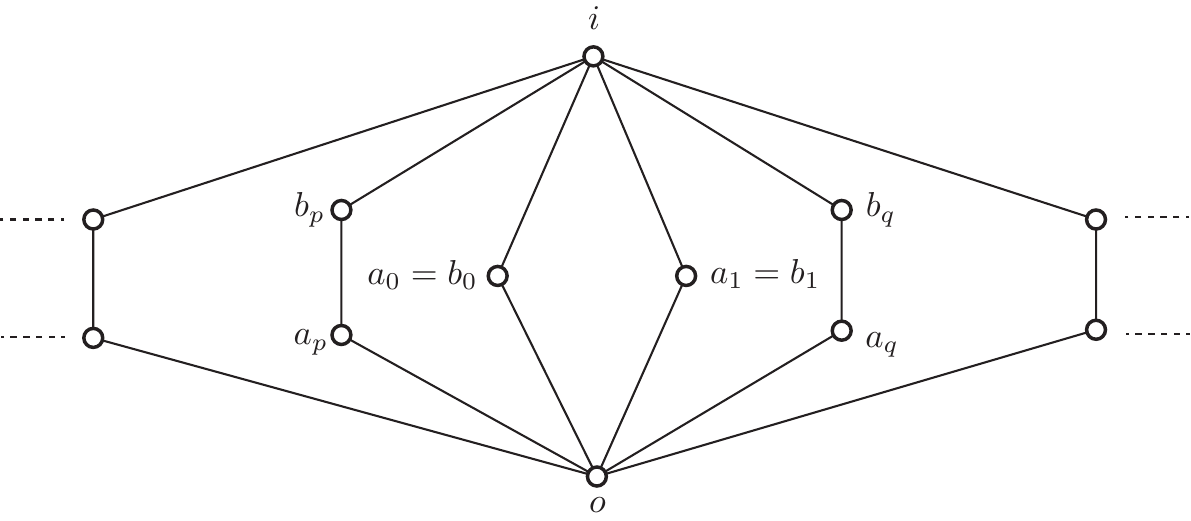}}
\caption{The lattice $F$}\label{F:F}
\end{figure}

\subsection{The lattice $K$}\label{S:K}
We are going to construct the lattice $K$ 
(of Theorem~\ref{T:bounded})
as an extension of $F$. 
The principal congruence of $K$ representing $p \in P^-$ 
will be $\con{a_p, b_p}$.
\begin{figure}[hbt]
\centerline{\includegraphics[scale=1.0]{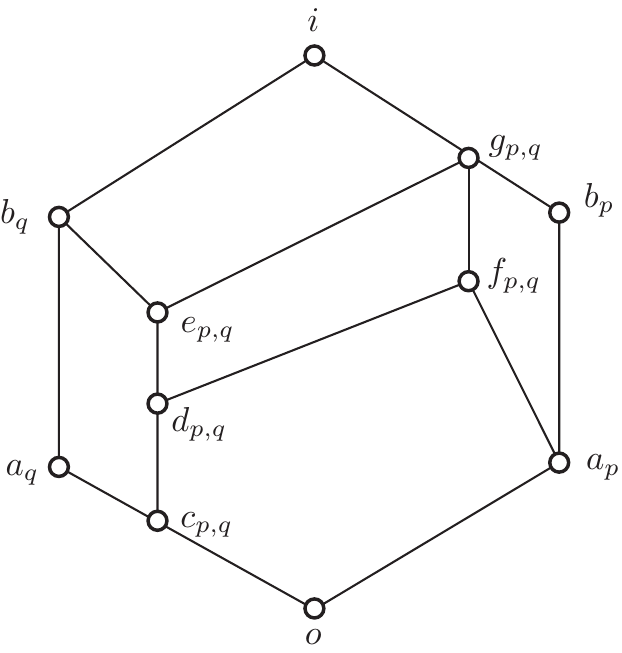}}
\caption{The lattice $S = S(p, q)$}\label{F:S}
\end{figure}

\begin{enumeratea}

\item We add the set
\[
   \set{c_{p,q},d_{p,q},e_{p,q},f_{p,q},g_{p,q}}
\]
to the sublattice 
\[
   \set{o, a_p, b_p, a_q, b_q, i}
\]
of $F$ for $p < q \in P^-$ to form the sublattice $S(p,q)$, 
as illustrated in Figure~\ref{F:S}. 

\item For $p \in \Pd$, let $C_p = \set{o, a_p, b_p, i}$,
a four-element chain.

\item We define the set
\begin{equation*}
   K = \UUm{S(p, q)}{p < q \in P^-} \uu 
      \UUm{C_p}{p \in \Pd}\uu \set{a_0, a_1}.
\end{equation*}
\end{enumeratea}
Now we are ready to define the lattice $K$.

\begin{figure}[p]
\centerline{\includegraphics[scale=.85]{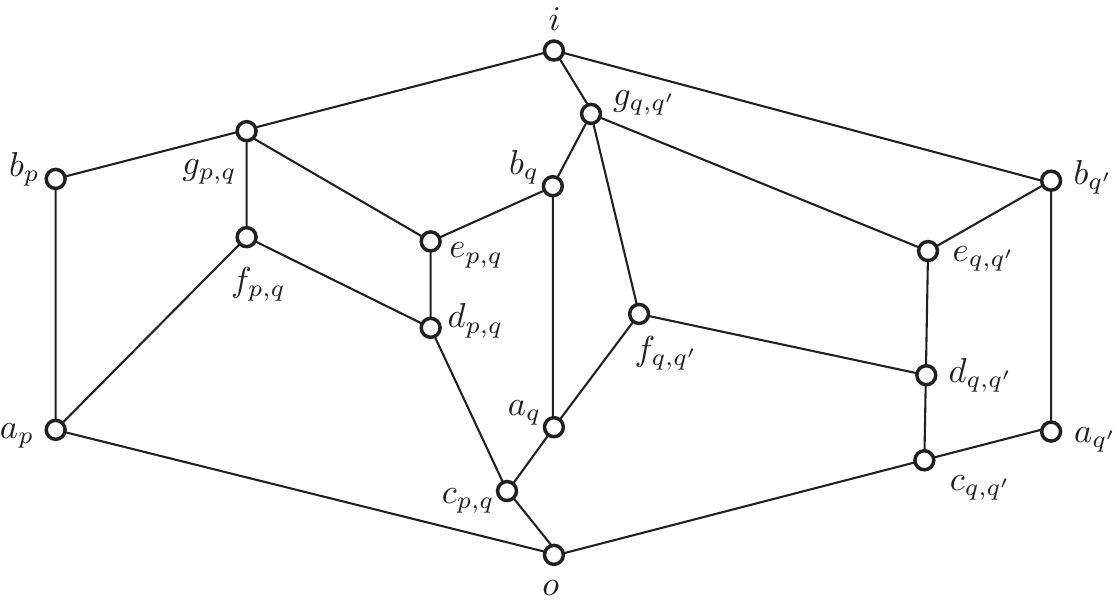}}
\caption{The lattice $S_{\SC{}} = S(p < q,\ q < q')$}\label{F:C}
\bigskip

\bigskip

\bigskip

\centerline{\includegraphics[scale=.85]{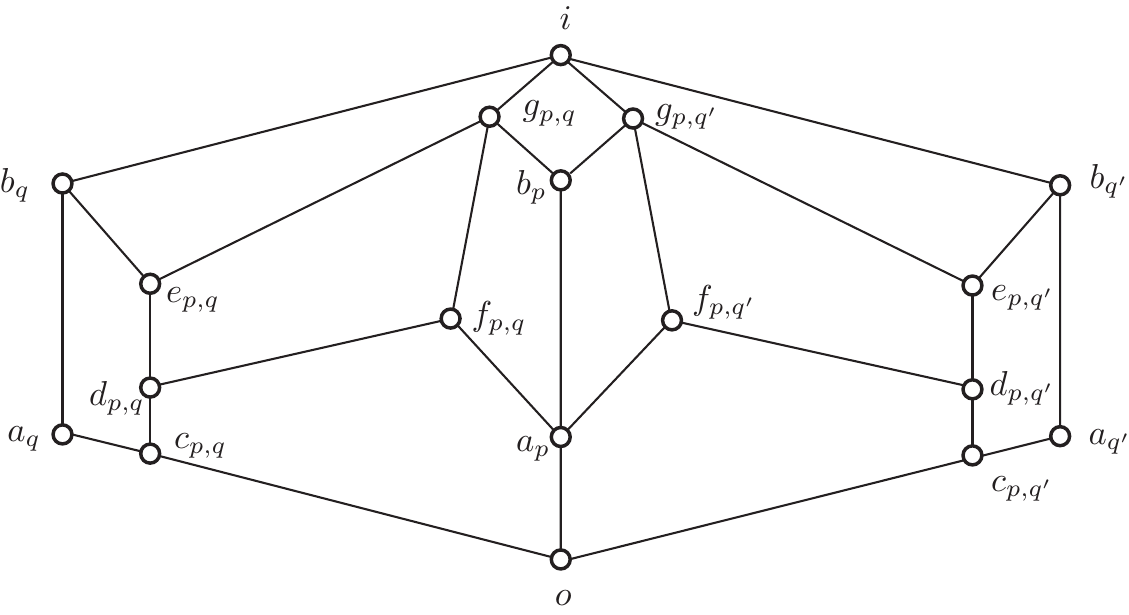}}
\caption{The lattice $S_{\SV{}} = S(p < q,\ p < q')$
with $q \neq q'$}\label{F:V}
\bigskip

\bigskip

\bigskip

\centerline{\includegraphics[scale=.80]{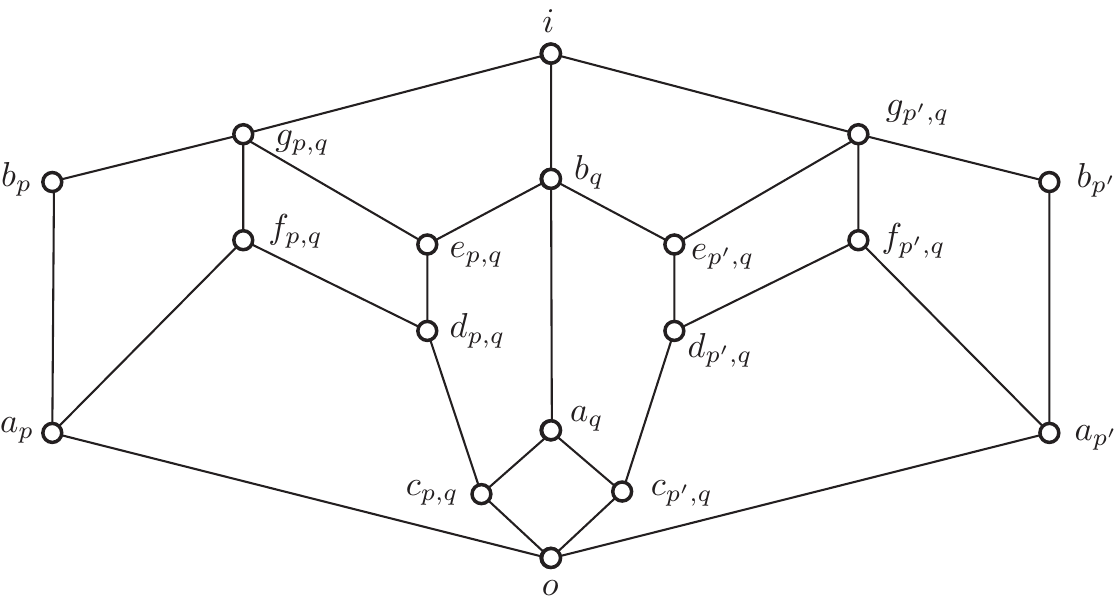}}
\caption{The lattice $S_{\SH{}} = S(p < q,\ p' < q)$
with $p \neq p'$}\label{F:H}
\end{figure}

We make the set $K$ into a lattice by 
defining the operations $\jj$ and $\mm$ with the
following nine rules. 
\begin{enumeratei}

\item The operations $\jj$ and $\mm$
are idempotent and commutative and $o$ is the zero and $i$ is the unit of $K$.

\item For $p \in \Pd$ and $x, y \in C_p \ci K$,  
we define $x \jj y$, $x \mm y$ in $K$ 
as in the chain~$C_p$. (So $C_p$~is a sublattice of $K$.)

\item  For $p < q \in P^-$ and $x, y \in S(p, q) \ci K$,
we define $x \jj y$, $x \mm y$ in $K$ as in the lattice
$S(p, q)$.  (So $S(p, q)$~is a sublattice of $K$.)

\item For $p \in \Pd$, $x \in C_p^-$, and $y \in K - C_p$,
the elements $x$ and $y$ are com\-ple\-mentary in~$K$, that is,
$x \jj y = i$ and $x \mm y = o$. 

\item  For $x = a_0$ and for $x = a_1$,
the element $x$ is complementary to any element 
$y \neq x \in K^-$.

In the following four rules, 
let $p < q,\ p' < q' \in P^-$,
$x \in S(p, q)^-$, and $y \in S(p', q')^-$.
By rule (iii), we can assume that
$\set{p, q} \neq \set{p', q'}$.

\item If $\set{p, q} \ii \set{p', q'} = \es$, 
then the elements $x$ and $y$ are complementary in~$K$.

\item If $q = p'$, 
we form $x \jj y$ and $x \mm y$ in $K$
in the lattice 
\[
   S_{\SC{}} = S(p < q,\ q < q'),
\] 
illustrated in Figure \ref{F:C}.

\item  If $p = p'$ and $q \neq q'$, 
we form $x \jj y$ and $x \mm y$ in $K$
in the lattice 
\[
   S_{\SV{}} = S(p < q,\ p < q'),
\]  
illustrated in Figure \ref{F:V}.

\item If $q = q'$ and $p \neq p'$, 
we form $x \jj y$ and $x \mm y$ in $K$
in the lattice 
\[
   S_{\SH{}} = S(p < q,\ p' < q),
\] 
illustrated in Figure \ref{F:H}. 
\end{enumeratei}

In the last three rules, $\SC{}$ for chain,
$\SV{}$ for V-shaped, 
$\SH{}$ for Hat-shaped
refer to the shape of the three element order
$\set{p, q} \uu \set{p', q'}$ in $P^-$.

Observe that Rules (vi)--(ix) exhaust all possibilities
under the assumption $\set{p, q} \neq \set{p', q'}$.

Note that
\begin{align*}
           S &= S(p, q),\\
   S_{\SC{}} &= S(p < q,\ q < q'),\\
   S_{\SV{}} &= S(p < q,\ p < q'),\\ 
   S_{\SH{}} &= S(p < q,\ p' < q) 
\end{align*}
are sublattices of $K$. 

Informally, these rules state that to form $K$, 
we add elements to $F$ so that we get the sublattices 
listed in the previous paragraph.

Alternatively, we could have defined the ordering on $K$.
Note that the ordering is larger than
\begin{equation*}
   \UUm{\leq_{S(p, q)}}{p < q \in P^-} \uu 
      \UUm{\leq_{C_p}}{p \in \Pd}.
\end{equation*}

\section{The proof}\label{S:proof}
\subsection{Preliminaries}\label{S:Preliminaries}
It is easy, if somewhat tedious, 
to verify that $K$ is a lattice. 
Note that all our sublattice constructs are bounded planar orders, 
hence planar lattices.
We have to describe the congruence structure of $K$.

Let $L$ be a lattice with $0$ and $1$. 
A congruence block of $L$ is \emph{trivial} if it is a singleton.

A \emph{$\set{0, 1}$-isolating congruence} 
$\bga$ of~$L$ (an~\emph{I-congruence}, for short), 
is a congruence $\bga > \zero$, 
such that $\set{0}$ and $\set{1}$ are 
(trivial) congruence blocks of~\bga. 

If $|P| \leq 2$, then we can construct $K$ 
as a one- or two-element chain. So for the proof,
we assume that  $|P| > 2$, that is, $P^- \neq \es$.

\begin{lemma}\label{L:big}
For every $x \in K^-$, there is an 
$\set{o,i}$-sublattice $A$ of $K$ containing~$x$ and 
isomorphic to $\SM 3$.
\end{lemma}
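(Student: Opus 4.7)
The plan is to exploit rule (v), which makes $a_0$ and $a_1$ universal complements: each is complementary in $K$ to every element of $K^-$ different from itself. So the candidate sublattice $A$ will essentially always contain $\set{a_0, a_1}$ together with the given $x$.

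First, I would separate into cases based on whether $x \in \set{a_0,a_1}$. In the main case $x \in K^- \setminus \set{a_0, a_1}$, I would put $A = \set{o, a_0, a_1, x, i}$. By rule (v), $a_0 \mm a_1 = o$ and $a_0 \jj a_1 = i$; likewise the pairs $(a_0, x)$ and $(a_1, x)$ are complementary in $K$. Since none of $a_0, a_1, x$ is $o$ or $i$, these complementarity relations force the three elements to be pairwise incomparable, so $A$ is an $\set{o,i}$-sublattice of $K$ isomorphic to $\SM 3$.

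In the remaining case, say $x = a_0$ (the case $x = a_1$ is symmetric), I need a third atom for the diamond. Because we are assuming $P^- \neq \es$, there is some $p \in P^-$, and therefore $a_p \in K^- \setminus \set{a_0, a_1}$. I would then take $A = \set{o, a_0, a_1, a_p, i}$. All three pairs $(a_0, a_1)$, $(a_0, a_p)$, $(a_1, a_p)$ are complementary in $K$ by rule (v), and as before this yields the required $\SM 3$-sublattice containing $x = a_0$.

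I do not expect any serious obstacle: the entire statement is a direct unpacking of rule (v), together with the (already recorded) reduction to the case $P^- \neq \es$, which is precisely what guarantees a third element in $K^- \setminus \set{a_0, a_1}$. The only point worth spelling out carefully is that complementarity with both $o \neq a_0, a_1 \neq i$ automatically produces the pairwise incomparability needed for $\SM 3$, so that no further structural analysis of the sublattices $S(p,q)$ or $C_p$ is required here.
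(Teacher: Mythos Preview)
Your proposal is correct and matches the paper's proof essentially line for line: the paper also splits into the cases $x \in \set{a_0,a_1}$ and $x \notin \set{a_0,a_1}$, taking $A = \set{o, a_p, a_0, a_1, i}$ (for some $p \in P^-$) in the first case and $A = \set{o, x, a_0, a_1, i}$ in the second. Your version is slightly more explicit in invoking rule~(v) and in justifying why complementarity yields the $\SM 3$ structure, but the argument is the same.
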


\begin{proof}
Since $P^- \neq \es$ by assumption, we can choose $p \in P^-$. 
If $x \in \set{a_0, a_1}$, 
then \[A = \set{a_p, a_0, a_1, o, i}\] is such a sublattice.
If $x \nin \set{a_0, a_1}$, 
then \[A = \set{x, a_0, a_1, o, i}\] is such a sublattice.
\end{proof}

\begin{lemma}\label{L:nonisolating}
Let us assume that \bga is not an I-congruence of $K$.
Then $\bga = \one$.
\end{lemma}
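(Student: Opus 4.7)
The plan is to leverage Lemma~\ref{L:big} together with the simplicity of $\SM 3$. Because the conclusion would fail for $\bga = \zero$, the statement is tacitly restricted to nonzero $\bga$, and I work under that assumption.

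First I would unpack the hypothesis. Since $\bga > \zero$ but $\bga$ is not an I-congruence, at least one of $\set{o}$ and $\set{i}$ fails to be a $\bga$-block. Hence there exists $x \in K$ with either $o \equiv x \pmod{\bga}$ and $x \neq o$, or $i \equiv x \pmod{\bga}$ and $x \neq i$. If $x$ happens to be the opposite bound, then $o \equiv i \pmod{\bga}$ already and we conclude $\bga = \one$. Otherwise $x \in K^-$.

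Next I would invoke Lemma~\ref{L:big} to produce an $\set{o,i}$-sublattice $A$ of $K$ with $x \in A$ and $A \iso \SM 3$. The restriction of $\bga$ to $A$ identifies a bound of $A$ with one of its three non-bound elements, so it is a nontrivial congruence on $A$; but $\SM 3$ is simple, so that restriction must be the total relation on $A$. In particular $o \equiv i \pmod{\bga}$, and therefore $\bga = \one$ on all of $K$.

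I do not anticipate a serious obstacle: once Lemma~\ref{L:big} is in hand, the lemma is a one-move consequence of the simplicity of $\SM 3$. The only minor subtlety is the implicit hypothesis $\bga > \zero$ noted at the start; everything else is an immediate case split on whether the non-trivial block meets $o$ or $i$.
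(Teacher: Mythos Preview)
Your argument is correct and follows the same route as the paper: invoke Lemma~\ref{L:big} to place the offending element in an $\set{o,i}$-sublattice $A\iso\SM 3$, use the simplicity of $\SM 3$ to get $\cng o=i(\bga)$, and conclude $\bga=\one$. Your version is in fact a bit more careful than the paper's, since you explicitly handle the edge cases $\bga=\zero$ and $x\in\set{o,i}$ before reaching $x\in K^-$.
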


\begin{proof}
Indeed, if \bga is not an I-congruence of $K$, 
then there is an $x \in K^-$ such that $\cng x = o (\bga)$
or $\cng x = o (\bga)$. Using the sublattice $A$
provided by Lemma~\ref{L:big}, 
we conclude that $\bga = \one$,
since $A$ is a simple $\set{o,i}$-sublattice.
\end{proof}

\subsection{The congruences of $S$}\label{S:Scongs}
We start with the congruences of the lattice $S = S(p, q)$ 
with $p < q \in P^-$, see Figure~\ref{F:S}.

\begin{lemma}\label{L:Scongs}
The lattice $S = S(p, q)$ has two I-congruences:
\[
   \con{a_p, b_p} < \con{a_q, b_q},
\]
see Figure~\ref{F:Scongs}.
\end{lemma}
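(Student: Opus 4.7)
The plan is to exploit the fact that $S = S(p,q)$ is a finite planar lattice (drawn in Figure~\ref{F:S}), so its congruences are determined by which of its finitely many prime quotients (covering pairs) they collapse, subject to closure under perspectivity. I~would begin by listing the covers of $S$ and identifying, for each, whether it is projective to one of the distinguished pairs $\set{a_p, b_p}$ and $\set{a_q, b_q}$.

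First, I~would verify the positive statements. Inspecting Figure~\ref{F:S}, the set of prime quotients projective to $\set{a_p, b_p}$ lies entirely inside the open interval $(o, i)$, so $\con{a_p, b_p}$ leaves $\set{o}$ and $\set{i}$ as trivial blocks; the same holds for $\con{a_q, b_q}$. Thus both are I-congruences. To establish the comparison $\con{a_p, b_p} \leq \con{a_q, b_q}$, I~would exhibit an explicit chain of perspectivities (transposes) from $\set{a_p, b_p}$ to a prime quotient collapsed by $\con{a_q, b_q}$, routed through the auxiliary elements $c_{p,q}, d_{p,q}, e_{p,q}, f_{p,q}, g_{p,q}$ introduced when $S$ was built.

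Second, to show these are the only I-congruences, I~would assume that \bga is any I-congruence of $S$ and pick a prime quotient $\set{u, v}$ with $\cng u = v (\bga)$. Walking through each covering pair of $S$ and computing the congruence it generates by perspectivity-closure, I~would verify that if $\set{u, v}$ is not projective to $\set{a_p, b_p}$ or $\set{a_q, b_q}$, then $\con{u, v}$ forces $o$ or $i$ into a non-trivial block, contradicting the I-hypothesis. Hence $\bga \in \set{\con{a_p, b_p}, \con{a_q, b_q}}$.

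The main obstacle is purely combinatorial book-keeping: $S$ has enough covers that organising the projectivity classes cleanly, and verifying that the non-distinguished classes really do propagate to $o$ or $i$, requires a careful case analysis driven by the diagram. I~would summarise the outcome via the labeled picture in Figure~\ref{F:Scongs}, marking for each of the two I-congruences precisely which edges it collapses.
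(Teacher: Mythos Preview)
Your plan is correct and matches the paper's argument almost exactly: the paper also verifies the two congruences in Figure~\ref{F:Scongs} directly, then runs through all twelve prime intervals of $S$ to check that each either generates one of $\con{a_p,b_p}$, $\con{a_q,b_q}$ or fails to be an I-congruence. The only point the paper makes more explicit than you do is the final observation that, because the two join-irreducible I-congruences found are comparable, no join-reducible I-congruence can arise---this is what justifies your concluding ``Hence $\bga \in \set{\con{a_p,b_p},\con{a_q,b_q}}$''.
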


\begin{figure}[bth]
\centerline{\includegraphics[scale=0.9]{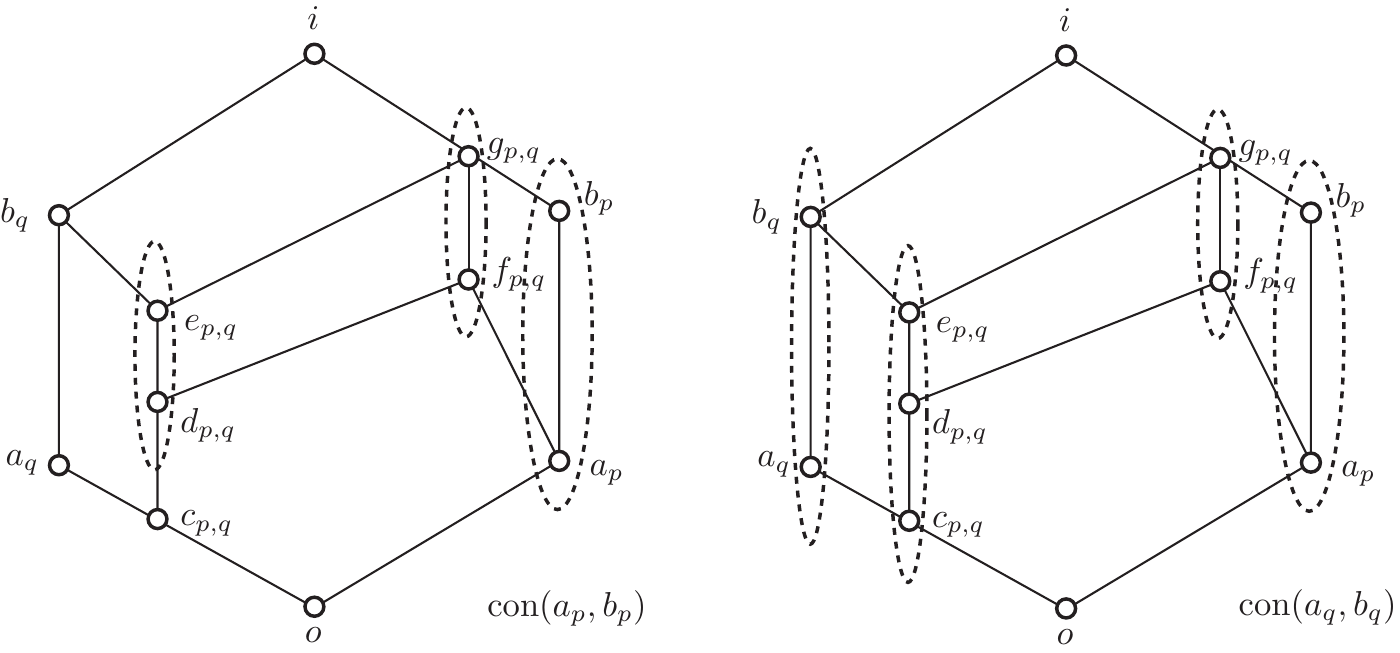}}
\caption{The I-congruences of $S = S(p, q)$ }\label{F:Scongs}
\end{figure}

\begin{proof}
An easy computation. 
First, check that Figure \ref{F:Scongs} correctly
describes the two join-irreducible I-con\-gruences
$\con{a_p, b_p}$ and $\con{a_q, b_q}$.
Then, check all 12 prime intervals $[x, y]$
and show that $\con{x, y}$ is either not an I-con\-gruence 
or equals $\con{a_p, b_p}$ or $\con{a_q, b_q}$.
For instance, $\con{d_{p,q}, e_{p,q}} = \con{a_p, b_p}$
and $[b_p, g_{p,q}]$ is not an I-con\-gruence because  
$\cng c_{p,q}=o(\con{b_p, g_{p,q}})$.
The other 10 cases are similar. 
Finally, note that the
two join-irreducible I-congruences we found are comparable, 
so there are no join-reducible I-congruences.
\end{proof}

Clearly, $S(p, q)/\con{a_q, b_q} \iso \SC 2 \times \SC 3$.

\subsection{The congruences of $K$}\label{S:Kcongs}

For $p \in \Pd$, let $\bge_p$ denote the congruence
$\con{a_p, b_p}$ on $K$.

Let $H \ci \Pd$. Let $\bge_H$ denote the equivalence relation
\[
    \bge_H = \JJm{\bge_p}{p \in H}
\] 
on $K$.

Let \bgb be an I-congruence of the lattice $K$. 
We associate with \bgb a subset of the order $P^-$:
\[
   \Base(\bgb) = \setm{p \in P^-}{\cngd a_p=b_p(\bgb)}.
\]

\begin{lemma}\label{L:downset}
Let \bgb be an I-congruence of the lattice $K$.
Then $\Base(\bgb)$ is a down set of $P^-$.
\end{lemma}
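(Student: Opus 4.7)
The plan is to show downward closure: whenever $p \in \Base(\bgb)$ and $p' \in P^-$ with $p' < p$, then $p' \in \Base(\bgb)$. The key structural observation is that, since $p$ and $p'$ are comparable elements of $P^-$, neither can belong to $\Pd$; consequently the sublattice $S(p', p)$ of $K$ is defined, and it contains the bounds $o, i$ together with both pairs $a_{p'}, b_{p'}$ and $a_p, b_p$.

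First I would restrict $\bgb$ to $S(p', p)$, obtaining a congruence $\bgb_S$ of that sublattice. Because $\bgb$ is an I-congruence of $K$, the classes $\set{o}$ and $\set{i}$ remain trivial under $\bgb_S$. Because $p \in \Base(\bgb)$, the pair $(a_p, b_p)$ lies in $\bgb$, hence in $\bgb_S$, so $\bgb_S > \zero$. Thus $\bgb_S$ qualifies as an I-congruence of $S(p', p)$.

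Next, I would invoke Lemma~\ref{L:Scongs} applied to $S(p', p)$, whose only I-congruences form the two-element chain
\[
   \con{a_{p'}, b_{p'}} < \con{a_p, b_p}
\]
(computed inside $S(p', p)$). Since $\bgb_S$ collapses $a_p$ with $b_p$, it must equal the top I-congruence $\con{a_p, b_p}$; but this congruence also collapses $a_{p'}$ with $b_{p'}$. Therefore $(a_{p'}, b_{p'}) \in \bgb$, which gives $p' \in \Base(\bgb)$.

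The only delicate point is verifying that the restriction $\bgb_S$ is genuinely an I-congruence of $S(p', p)$, rather than merely a congruence, but both the singleton-block property at $o, i$ and the nontriviality transfer directly from the hypotheses, so this is routine. There is no substantive obstacle; once the restriction is set up, the conclusion is a single appeal to Lemma~\ref{L:Scongs}.
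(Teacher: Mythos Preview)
Your proof is correct and follows the same route as the paper's: both arguments work inside the sublattice $S(p', p)$ and invoke Lemma~\ref{L:Scongs} to conclude that collapsing $a_p, b_p$ forces collapsing $a_{p'}, b_{p'}$. The paper's version is terser---it simply cites Lemma~\ref{L:Scongs} (and the accompanying figure) without spelling out the restriction step or the classification of the restricted congruence---but the substance is identical.
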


\begin{proof}
Let $p < q \in P$ and let $q \in \Base(\bgb)$.
Then $\cng a_q=b_q(\bgb)$. 
By Lemma~\ref{L:Scongs} (see also Figure~\ref{F:Scongs}),
$\cng a_p=b_p(\bgb)$, so $p \in \Base(\bgb)$, 
verifying that $\Base(\bgb)$ is a down set.
\end{proof}

Let $H$ be a down set of $P^-$. 
We define the binary relation:
\[
   \bgb_H = \bge_H \uu \UUm{\consub{S(p,q)}{a_q,b_q}}{q \in H} 
            \uu \UUm{\consub{S(p,q)}{a_p,b_p}}{p \in H}.
\]
\begin{lemma}\label{L:cong}
$\bgb_H$ is an I-congruence on $K$.
\end{lemma}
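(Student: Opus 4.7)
The plan is to verify three things in sequence: that $\bgb_H$ is (a) an equivalence relation on $K$, (b) compatible with $\jj$ and $\mm$, and (c) isolates $o$ and $i$ as singleton blocks while sitting strictly above $\zero$.

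I would first describe the non-trivial blocks of $\bgb_H$ explicitly: for every $p \in H \ii \Pd$, the pair $\set{a_p, b_p}$ is a block contributed by $\bge_H$; for every $p < q$ in $P^-$, the non-trivial blocks inside $S(p,q)$ are those of $\consub{S(p,q)}{a_q, b_q}$ if $q \in H$ (in which case $p \in H$ automatically, since $H$ is a down set), those of $\consub{S(p,q)}{a_p, b_p}$ if $p \in H$ and $q \nin H$, and absent if $p \nin H$. Reflexivity and symmetry are built in. For transitivity, the key is that two distinct constituent sublattices among the $S(p,q)$ and $C_p$ overlap only in a ``frame'' of the form $\set{o, i, a_r, b_r}$; the down-set hypothesis on $H$ ensures that the restrictions to such an overlap agree (by inspection of Figure~\ref{F:Scongs}), so the union needs no further closure.

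The heart of the argument is the substitution property (b). Fixing $(x, y) \in \bgb_H$ and $z \in K$, I would verify $x \jj z \mathrel{\bgb_H} y \jj z$ and dually. If $(x, y)$ is an $\bge_H$-pair $\set{a_p, b_p}$ with $p \in H \ii \Pd$, Rules~(iv) and~(v) force $a_p \jj z = b_p \jj z = i$ whenever $z \nin C_p$, while $z \in C_p$ is handled inside the chain. If instead $(x, y) \in S(p, q)^-$ is non-trivial for the congruence assigned by $H$, we case-split on $z$ through Rules (i)--(ix): the easy cases ($z \in \set{o, i, a_0, a_1}$, $z \in C_{p'}^-$, and $z$ in an $S$-sublattice disjoint from $\set{p, q}$) collapse both joins to the same value, and $z \in S(p, q)$ is immediate inside that sublattice. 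The main obstacle is the remaining trio of cases where $z$ lies in some $S(p', q')^-$ sharing exactly one index with $\set{p, q}$; here the joins are computed inside $S_{\SC{}}$, $S_{\SV{}}$, or $S_{\SH{}}$ (Figures~\ref{F:C}--\ref{F:H}), and one must read off from the three diagrams that $(x \jj z, y \jj z)$ again lies in the congruence on $S(p', q')$ selected by $H$. This is a finite but delicate inspection, resting on the fact that the I-congruence on $S(p, q)$ pushes consistently through the shared frame into the I-congruence selected on $S(p', q')$, which is exactly what the down-set property of $H$ guarantees.

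Property (c) is then immediate: Lemma~\ref{L:Scongs} says every I-congruence of $S(p, q)$ keeps $\set{o}$ and $\set{i}$ as trivial blocks, the same holds in each $C_p$, and the $\bge_H$-block $\set{a_p, b_p}$ for $p \in \Pd$ avoids $o$ and $i$. Hence no non-trivial block of $\bgb_H$ touches $o$ or $i$, and $\bgb_H > \zero$ as soon as $H \neq \es$, so $\bgb_H$ is an I-congruence of $K$.
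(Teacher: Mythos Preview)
Your proposal is correct and follows essentially the same route as the paper: both verify that $\bgb_H$ is an equivalence relation by inspecting its blocks, then establish the Substitution Properties via a case split governed by Rules~(i)--(ix), with the substantive work reducing to the three overlap configurations $S_{\SC{}}$, $S_{\SV{}}$, $S_{\SH{}}$. You are in fact slightly more careful than the paper in isolating the $C_p$-case for $p \in \Pd$, the elements $a_0$, $a_1$, and the caveat $H \neq \es$ needed for $\bgb_H > \zero$.
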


Note that $\bgb_\es = \zero$.

\begin{proof}
$\bgb_H$ is reflexive and symmetric.
It clearly leaves $o$ and $i$ isolated. 

It is easy to verify that 
$\bgb_H$ classes are pairwise disjoint 
two- and three-element chains, so $\bgb_H$ is transitive
and hence an equivalence relation.

We verify the Substitution Properties. 
By Lemma I.3.11 of \cite{LTF}, it is sufficient to verify
that if $x < y \in K$,  
and $\cng x = y (\bgb_H)$,
then $\cng x \jj z = y \jj z(\bgb_H)$ and 
$\cng x \mm z = y \mm z(\bgb_H)$ for $z \in K$.

So let $x < y \in K^-$ and $\cng x = y (\bgb_H)$.
Then $x < y \in S(p, q)^-$, for some  $p < q \in P^-$, and 
\[\cng x = y (\consub{S(p, q)}{a_q, b_q})\] with $q \in H$,
or \[\cng x = y (\consub{S(p, q)}{a_p, b_p})\] with $p \in H$.

Let $z \in S(p', q')^-$ with $p' < q' \in P^-$.

If $\set{p, q} = \set{p', q'}$, 
the Substitution Properties for $\bgb_H$ in $K$ follow 
from the Substitution Properties for $\con{a_p, b_p}$  in $S(p, q)$.

If $\set{p, q} \ii \set{p', q'} = \es$, 
then by Rule (vi), 
the elements $x$, $z$, and $y$, $z$
are complementary, so the Substitution Properties are trivial.

Otherwise, $\set{p, q} \uu \set{p', q'}$
has three elements. So we have three cases to consider.

Case $\SC{}$: $p < p' = q < q'$ (or symmetrically, $p' < q' = p < q$).

Case $\SV{}$: $p = p' < q$, $p = p' < q'$, $q \neq q'$.

Case $\SH{}$: $p < q = q'$, $p' < q = q'$, $p \neq p'$.

To verify Case $\SC{}$, utilize Figure~\ref{F:C}.
Since 
\[
   x \leq y \in S(p, q)^- \ci S(p < q,\ q < q')^-
\]
and 
\[
   z \in S(q, q')^- \ci S(p < q,\ q < q')^-,
\]
there is only way (SP${}_\jj$) can fail: $x \jj z < y \jj z$.

We can assume that $z \nin S(p < q)$, 
so $x\jj z, y \jj z \nin S(p<q)$. 
If $q \in H$, 
then there is only one case to check for the I-congruence $\bgb_H$: 
\[
   (x \jj z,y \jj z)=(f_{q, q'}, g_{q, q'}) 
           \in \con{a_q, b_q} \ci \bgb_H.
\] 
If $q \nin H$, then $p \in H$ and $x \jj z < y \jj z$ is impossible. 
This shows that $\bgb_H$ satisfies (SP${}_\jj$). 
A similar, in fact easier, argument yields (SP${}_\mm$).

We leave Case $\SV{}$ and Case $\SH{}$ to the reader.
\end{proof}

Now the following statement is clear. 

\begin{lemma}\label{L:correspond}
The correspondence 
\[
   \gf \colon \bgb \to \Base(\bgb)
\]
is an order preserving bijection between
the order of I-congruences of $K$ and the order of 
down sets of $P^-$. We extend \gf by
$\zero \to \set{0}$ and  $\one \to P$.
Then \gf is an isomorphism between $\Con K$ and $\Down^- P$, 
the order of nonempty down sets of $P$.
\end{lemma}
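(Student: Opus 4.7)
The plan is to package the preceding lemmas into the stated isomorphism in three stages. First, Lemma~\ref{L:downset} shows that $\gf$ is a well-defined map from the I-congruences of $K$ into the down sets of $P^-$, while Lemma~\ref{L:cong} provides the inverse candidate $H \mapsto \bgb_H$. For surjectivity, I would verify directly from the defining formula for $\bgb_H$ that $\Base(\bgb_H) = H$: a pair $(a_p, b_p)$ is forced into $\bgb_H$ by its generators precisely when $p \in H$, and inspection of the three generating families confirms that no extra pairs of this form are collapsed.

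The main obstacle is injectivity, that is, the identity $\bgb = \bgb_{\Base(\bgb)}$ for every I-congruence $\bgb$. The inclusion $\bgb_{\Base(\bgb)} \ci \bgb$ is the easier direction: if $p \in \Base(\bgb)$ then $(a_p, b_p) \in \bgb$, so by Lemma~\ref{L:Scongs} the join-irreducible I-congruence $\consub{S(p,q)}{a_p,b_p}$---generated in $S(p,q)$ by this single collapse---sits inside $\bgb$; the $q$-contributions are handled in the same way, and the singleton collapses $\bge_p$ for $p \in \Pd \ii \Base(\bgb)$ are immediate. The reverse inclusion $\bgb \ci \bgb_{\Base(\bgb)}$ is the delicate step: given a collapsed pair $(x, y) \in \bgb$ with $x < y$ in $K^-$, I would first argue that $x$ and $y$ must lie in a common block---either some $S(p,q)^-$ or some $C_p$ with $p \in \Pd$---since otherwise the complementarity rules (iv)--(vi) supply an element $z$ that, via the Substitution Properties, would force the inadmissible collapse $o \equiv i \pmod{\bgb}$, contradicting the I-congruence hypothesis. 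Lemma~\ref{L:Scongs} then classifies the restriction of $\bgb$ to $S(p,q)$ as one of the two listed I-congruences (or their join), placing $p$ (and possibly $q$) into $\Base(\bgb)$ and hence $(x,y)$ into $\bgb_{\Base(\bgb)}$; the $C_p$ case is immediate since the only nontrivial collapse in that chain compatible with outside complementarity is $a_p \equiv b_p$.

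Order-preservation in both directions is then automatic: $\bgb \ci \bgb'$ trivially yields $\Base(\bgb) \ci \Base(\bgb')$, and conversely $H \ci H'$ gives $\bgb_H \ci \bgb_{H'}$ from the explicit generating-set description of $\bgb_H$; combined with the identity $\bgb = \bgb_{\Base(\bgb)}$ just established, this delivers the order isomorphism between the I-congruences and the nonempty down sets of $P^-$. Finally, to extend to $\Con K \iso \Down^- P$, I invoke Lemma~\ref{L:nonisolating}: every congruence of $K$ that is not an I-congruence is either $\zero$ or $\one$. Sending $\zero \mapsto \set{0}$ and $\one \mapsto P$ adjoins the bottom and top of both ordered sets, and the images of the I-congruences fill in exactly the down sets of $P$ of the form $\set{0} \uu H$ with $H$ a nonempty down set of $P^-$, completing the list of nonempty down sets of $P$.
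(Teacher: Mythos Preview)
Your proposal is correct and is exactly the kind of argument the paper has in mind: the paper's own ``proof'' is the single sentence ``Now the following statement is clear,'' so you are supplying the details that Lemmas~\ref{L:downset}, \ref{L:cong}, \ref{L:Scongs}, and~\ref{L:nonisolating} are meant to deliver. The two-sided inverse $H \mapsto \bgb_H$, the verification $\Base(\bgb_H)=H$, the key identity $\bgb=\bgb_{\Base(\bgb)}$, and the extension by $\zero$ and $\one$ are all as intended.

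One step deserves tightening. In the reverse inclusion $\bgb \ci \bgb_{\Base(\bgb)}$ you argue that a comparable pair $x<y$ in $K^-$ must lie in a common $S(p,q)^-$ or $C_p$, appealing to ``the complementarity rules (iv)--(vi).'' Those rules only cover the cases where $x$ and $y$ sit in blocks with \emph{disjoint} index sets (or one of them is $a_0$, $a_1$, or in some $C_p$). When $x\in S(p,q)^-$ and $y\in S(p',q')^-$ with $|\{p,q\}\cap\{p',q'\}|=1$ you are in cases (vii)--(ix), and there the pair is not complementary; the conclusion that $x<y$ forces a common $S(p,q)$ comes instead from inspecting the planar lattices $S_{\SC{}}$, $S_{\SV{}}$, $S_{\SH{}}$ of Figures~\ref{F:C}--\ref{F:H}, where one sees that elements specific to distinct wings are incomparable. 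Once you note this, no auxiliary element $z$ is needed: $x<y$ in $K^-$ already forces the common block directly. With that adjustment the argument is complete.
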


\begin{lemma}\label{L:principal}
\gf and $\gf^{-1}$ both preserve the property of being principal.
\end{lemma}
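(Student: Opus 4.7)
The plan is to pin down the principal elements on both sides and check that $\gf$ pairs them up. On the $\Down^- P$ side, the principal elements are the principal down sets $\setm{x\in P}{x\leq p}$ for $p\in P$. I~claim that on the $\Con K$ side the only principal congruences are $\zero$, $\one$, and the $\con{a_p,b_p}$ for $p\in P^-$, and that $\gf$ sends them respectively to $\set{0}$, $P$, and $\setm{x\in P}{x\leq p}$; from this the lemma is immediate.

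For the $\gf^{-1}$ direction, I~exhibit a principal pre-image for each principal down set. The set $\set{0}$ is the image of $\zero=\con{o,o}$. For $P$, Rule~(v) tells us that $a_0$ and $a_1$ are complementary in~$K$, so $\con{a_0,a_1}=\one$. For $p\in P^-$, I~identify $\con{a_p,b_p}$ with $\bgb_H$, where $H=\setm{x\in P^-}{x\leq p}$: the inclusion $\con{a_p,b_p}\ci\bgb_H$ holds because $\cng a_p=b_p(\bgb_H)$, which also shows that $\con{a_p,b_p}$ lies strictly below $\one$ and is therefore an I-congruence, say $\bgb_{H'}$; then $p\in H'$ and the down-set property of $H'$ give $H\ci H'$, hence $\bgb_H\ci\bgb_{H'}=\con{a_p,b_p}$.

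For the $\gf$ direction, I~take a principal congruence $\bgb=\con{x,y}$ with $x\leq y$ in $K$. If $x=y$ then $\bgb=\zero$; if $\bgb\neq\zero$ is not an I-congruence, Lemma~\ref{L:nonisolating} gives $\bgb=\one$; in both cases $\gf(\bgb)$ is a principal down set. Otherwise $\bgb$ is an I-congruence and $x<y$ in~$K^-$, and I~split according to the location of $\set{x,y}$. If $\set{x,y}\ci C_r$ for some $r\in\Pd$, then $\set{x,y}=\set{a_r,b_r}$ and $\bgb=\con{a_r,b_r}$. If $\set{x,y}\ci S(p,q)$ for some $p<q\in P^-$, then Lemma~\ref{L:Scongs} forces $\consub{S(p,q)}{x,y}$ to be one of the two I-congruences of $S(p,q)$---yielding $\bgb=\con{a_r,b_r}$ for some $r\in\set{p,q}$ after checking that $\con{a_r,b_r}$ restricted to $S(p,q)$ already collapses $(x,y)$---or else $\one_{S(p,q)}$, the latter contradicting the I-congruence hypothesis.

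The hard part will be the remaining configuration: $\set{x,y}$ is contained in no single $C_r$ or $S(p,q)$. Under Rules~(iv)--(vi), the elements $x$ and $y$ are complementary in~$K$, so $\bgb=\one$, contradicting the I-congruence hypothesis. The delicate subcases arise under Rules~(vii)--(ix), where $x$ and $y$ lie strictly inside two $S$-sublattices sharing exactly one endpoint; for these I~would inspect Figures~\ref{F:C}, \ref{F:V}, and \ref{F:H} and argue, by tracing a short chain of joins and meets of $x$, $y$ with suitable boundary elements, that the congruence generated in $S_\SC$, $S_\SV$, or $S_\SH$ already collapses $o$ with $i$, forcing $\bgb=\one$ and once again a contradiction. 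This exhausts the case analysis and shows that the only principal I-congruences are the $\con{a_r,b_r}$, completing the verification.
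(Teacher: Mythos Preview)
Your approach is essentially the paper's: show that every principal I-congruence of $K$ equals $\con{a_r,b_r}$ for some $r\in P^-$ by locating the generating pair $x<y$ inside a single $S(p,q)$ (or $C_r$) and then invoking Lemma~\ref{L:Scongs}. The paper compresses all of this into one sentence (``we must have $x,y\in S(p,q)$\dots\ otherwise $\bgb$ would not be an I-congruence''), whereas you spell out the case split along Rules~(i)--(ix) and also treat the $C_r$ case for $r\in\Pd$ explicitly---a case the paper's wording technically omits. Your $\gf^{-1}$ direction is likewise a more explicit version of the paper's one-line converse.

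Two small corrections. First, when $\set{x,y}\ci S(p,q)$ and $\consub{S(p,q)}{x,y}$ is not one of the two I-congruences of $S(p,q)$, it need not be $\one_{S(p,q)}$: since $S(p,q)/\con{a_q,b_q}\iso\SC2\times\SC3$, there are several non-I congruences strictly below $\one$. What you actually need (and what suffices) is only that any non-I congruence of $S(p,q)$ identifies $o$ or $i$ with a neighbor; then $\bgb=\con{x,y}$ fails to be an I-congruence of $K$, and Lemma~\ref{L:nonisolating} gives $\bgb=\one$. Second, under Rules~(iv)--(vi), complementarity of $x$ and $y$ together with $x<y$ forces $x=o$ and $y=i$; so this case is vacuous for $x,y\in K^-$, rather than yielding $\bgb=\one$ as you wrote.

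Your ``hard part'' for Rules~(vii)--(ix) is left as a sketch, which is no worse than the paper's treatment. The figure inspection you propose does work: for each prime interval $[x,y]$ of $S_\SC$, $S_\SV$, $S_\SH$ that is not already a prime interval of one of the two constituent $S$-lattices, one checks (as in the paper's sample computation $\cng c_{p,q}=o(\con{b_p,g_{p,q}})$) that $\con{x,y}$ collapses $o$ or $i$ with a neighbor.
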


\begin{proof}
Indeed, if the I-congruence \bgb of $K$ is principal, 
$\bgb = \con{x, y}$ for some $x < y \in K$,
then we must have $x, y \in S(p, q)$ for some 
$p < q \in P^-$ (otherwise, \bgb would not be an I-congruence). 
But in $S(p, q)$ (see Figure~\ref{F:Scongs}), the principal congruences are $\con{a_p, b_p}$ and $\con{a_q, b_q}$.
By Lemma~\ref{L:correspond}, 
we obtain that $\Base(\bgb) = {\Dg p}$ or $\Base(\bgb) = {\Dg q}$.

Conversely, if $\Base(\bgb) = {\Dg p}$, then $\bgb = \con{a_p, b_p}$.
\end{proof}

Now Theorem \ref{T:bounded} easily follows. Indeed, 
by Lemma~\ref{L:correspond}, \gf is an isomorphism 
between $\Con K$ and $\Down^- P$. Under this isomorphism,
by Lemma~\ref{L:principal}, principal congruences correspond
to principal down sets, so $\Princ K \iso P$, as claimed.

\subsection*{Note added in proof} 
R.\,W. Quackenbush has just sent me
a manuscript of his with P.\,P. P\'alfy, 
The representation of principal congruences,
accepted for publication in 1993. 
The final version was not submitted.

Anybody interested in the universal algebraic problems
of Section~\ref{S:Algebras} should read this manuscript.


\begin{thebibliography}{99}

\bibitem{gB48}
Birkhoff, G.: 
Lattice Theory. 
Amer. Math. Soc. Colloq. Publ. vol. 25, rev. ed.
Amer. Math.  Soc., New York 
(1948)

\bibitem{CFL}
Gr\"atzer, G.:
The Congruences of a Finite Lattice, A \emph{Proof-by-Picture} Approach. 
Birkh\"auser Boston (2006) 

\bibitem{LTF}
Gr\"atzer, G.:
Lattice Theory: Foundation. 
Birkh\"auser Verlag, Basel (2011)

\bibitem{GLS98a}
Gr\"atzer, G., Lakser, H., and Schmidt, E.\,T.: 
\emph{Congruence lattices of finite semimodular lattices,} 
Canad.\ Math.\ Bull. \tbf{41}, 290--297 (1998)

\bibitem{GS60a}
Gr\"atzer, G. and Schmidt, E.\,T.: 
On inaccessible and minimal congruence relations. I.
Acta Sci. Math. (Szeged) \tbf{21}, 337--342 (1960)

\bibitem{GS62}
\bysame: 
On congruence lattices of lattices.
Acta Math. Acad. Sci. Hungar. \tbf{13}, 179--185 (1962) 

\bibitem{GS95d}
\bysame: 
A lattice construction and congruence-pre\-serving extensions. 
Acta Math. Hungar. \tbf{66}, 275--288 (1995)

\bibitem{GS01a}
\bysame: 
Congruence-preserving extensions of finite lattices to
semimodular lattices. 
Houston J. Math. \tbf{27}, 1--9 (2001) 

\bibitem{GS01b}
\bysame: 
Regular congruence-preserving extensions of lattices.
Algebra Universalis \tbf{46}, 119--130 (2001)

\bibitem{GS03a}
\bysame: 
On the Independence Theorem of related structures for modular \textup{(}arguesian\textup{)} lattices.
Studia Sci. Math. Hungar. \tbf{40}, 1--12 (2003)

\bibitem{GW99}
Gr\"atzer, G. and Wehrung, F.:
Proper congruence-preserving extensions of lattices.
Acta Math. Hungar. \tbf{85}, 175--185 (1999) 

\bibitem{LTSTA}
Gr\"atzer G. and Wehrung, F., editors:
Lattice Theory: Empire. Special Topics and Applications. 
Birkh\"auser, 
Basel, forthcoming.

\bibitem{oO42}
Ore, O.: 
Theory of equivalence relations. 
Duke Math. J. \tbf{9}, 573--627 (1942)

\bibitem{tS62}
Schmidt, E.~T.:
\"U{b}er die Kongruenzverb\"{a}nder der Verb\"{a}nde.
 Publ.\ Math.\ Debrecen \tbf{9}, 243--256 (1962)

\bibitem{CLP}
Wehrung, F.:
A solution to Dilworth's congruence lattice problem.
Adv. Math.~\textbf{216}, 610--625 (2007)

\end{thebibliography}
\end{document}